\documentclass[12pt,leqno]{article}
\topmargin -0.5cm \textheight 22cm \textwidth 6in \oddsidemargin
-0.1cm

\usepackage{amsmath,amssymb,amsbsy,amsfonts,amsthm,latexsym,
         amsopn,amstext,amsxtra,euscript,amscd,amsthm,mathdots}

\allowdisplaybreaks

\numberwithin{equation}{section}

\newtheorem{lem}{Lemma}

\newtheorem{thm}{Theorem}

\newtheorem{rem}{Remark}

\begin{document}

\begin{large}
\centerline{\Large \bf Polynomials with integer roots}
\end{large}
\vskip 10pt
\begin{large}
\centerline{\sc  Patrick Letendre}
\end{large}
\vskip 10pt
\begin{abstract}
Let $\mathcal{F}_n$ be the set of unitary polynomials of degree $n \ge 2$ that have their roots in $\mathbb{Z}^*$. We note
$$
Q(x) := x^n+a_{1}x^{n-1}+\dots+a_{n}.
$$
We show that any two fixed consecutive coefficients $(a_{j},a_{j+1})$ ($j \in \{1,\dots,n-1\})$ define finitely many polynomials of $\mathcal{F}_n$.
\end{abstract}

\vskip 10pt
\noindent AMS Subject Classification numbers: 26C10, 30C10, 30C15.

\noindent Key words: Newton's inequalities, Polynomials with integer roots.

\vskip 20pt

\section{Introduction and notation}

Let $\mathcal{F}_n$ be the set of unitary polynomials of degree $n \ge 2$ that have their roots in $\mathbb{Z}^*$. For $Q(x) \in \mathcal{F}_n$, we write
\begin{eqnarray}
Q(x) & := & a_{0}x^n + a_{1} x^{n-1} + \cdots + a_{n-1} x + a_{n}\\
& = & \prod_{j=1}^{n}(x-x_j)\\
& = & x^n - s_1 x^{n-1} + \cdots + (-1)^{n-1}s_{n-1} x + (-1)^n s_n
\end{eqnarray}
where $s_j=s_j(x_1,\dots,x_n)$ are the elementary symmetric functions, that is
\begin{eqnarray*}
s_1 & = & \sum_{1 \le i \le n}x_i \\
s_2 & = & \sum_{1 \le i < j \le n}x_i x_j \\
& \cdots & \\
s_n & = & x_1 \cdots x_n.
\end{eqnarray*}
Given any fixed set of coefficients, we are interested by the polynomials of $\mathcal{F}_n$ that have these coefficients.

Let us illustrate the problem with two simple examples. Let $Q(x) \in \mathcal{F}_n$ and assume that $a_n$ is known. Since $(-1)^{n}a_{n} = s_{n} = x_{1}\cdots x_{n}$, we deduce that there are finitely many such polynomials.

Now, let $Q(x) \in \mathcal{F}_n$ and assume that $a_1$ and $a_2$ are known. From the identity
$$
a_1^2-2a_2 = s_1^2-2s_2 = x_1^2 + \cdots + x_n^2,
$$
it is clear that there are finitely many such polynomials.

The main result is contained in Theorem \ref{thm:1}.

\begin{thm}\label{thm:1}
Let $Q(x) \in \mathcal{F}_n$ be a polynomial satisfying $\max(|a_{j}|,|a_{j+1}|) \le M$ for some $j \in \{1,\dots,n-1\}$. Then
\begin{equation}\label{ineq_thm}
\max(|x_1|,\dots,|x_n|) \le \alpha_n M \quad (n \ge 2)
\end{equation}
where $\alpha_2=1$, $\alpha_3=2^{1/2}$, $\alpha_4=3 \cdot 2^{1/2}$, $\alpha_5=15 \cdot 2^{1/2}$ and $\alpha_n=2^{\frac{n^2+2n-18}{4}}$ for $n \ge 6$.
\end{thm}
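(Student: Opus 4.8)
The plan is to work throughout with the elementary symmetric functions, using that $|a_k|=|s_k|$ so that the hypothesis reads $|s_j|\le M$ and $|s_{j+1}|\le M$, and to exploit two structural facts about $Q$. First, since every $x_i\in\Z^*$, each $s_k$ is an integer, so a nonzero $s_k$ already satisfies $|s_k|\ge 1$, and moreover $|x_i|\ge 1$ for all $i$. Second, since all roots are real, the normalized coefficients $p_k:=s_k/\binom nk$ obey Newton's inequalities $p_k^2\ge p_{k-1}p_{k+1}$. I write $R:=\max_i|x_i|$ and, after relabelling, assume $|x_1|=R$, so that the goal \eqref{ineq_thm} is the linear bound $R\le\alpha_nM$.

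The engine is the extraction of the largest root. Writing $e_k:=s_k(x_2,\dots,x_n)$ for the elementary symmetric functions of the remaining roots, Vieta's relations give $s_j=e_j+x_1e_{j-1}$ and $s_{j+1}=e_{j+1}+x_1e_j$, hence $x_1e_{j-1}=s_j-e_j$ and $x_1e_j=s_{j+1}-e_{j+1}$. The decisive point is that $e_{j-1}$ and $e_j$ cannot both vanish: Newton's inequalities applied to the real-rooted degree-$(n-1)$ factor $\prod_{i\ge2}(x-x_i)$ force $e_k=0\Rightarrow e_{k-1}e_{k+1}<0$, so no two consecutive $e_k$ are zero. Whichever of $e_{j-1},e_j$ is nonzero is an integer of absolute value $\ge1$, and the matching identity yields either $R\le|s_j|+|e_j|\le M+|e_j|$ or $R\le|s_{j+1}|+|e_{j+1}|\le M+|e_{j+1}|$. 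Thus the whole problem reduces to bounding symmetric functions of the smaller roots by a quantity linear in $M$.

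The heart of the argument — and the main obstacle — is to show that this process cannot keep producing ever larger roots: sustaining $|s_j|,|s_{j+1}|\le M$ in the presence of several large roots would require several consecutive elementary symmetric functions of the complementary roots to be simultaneously small, hence (being integers controlled by Newton) to vanish, which real-rootedness forbids beyond isolated positions. I would make this quantitative by iterating the extraction: peel off the roots in decreasing order of absolute value, at each stage using the two Newton inequalities straddling the relevant index to keep a pair of consecutive symmetric functions under control, and track how the bound degrades. Only the first extraction can consume the full scale $M$; the subsequently peeled roots are pinned to constants determined by $n$, so the final estimate stays linear in $M$. The accumulated binomial factors $\binom nk$ coming from Newton's inequalities, together with the up to $n$ peeling steps, are exactly what generate the constant, and optimizing this bookkeeping should yield the stated $\alpha_n$, in particular the growth $2^{(n^2+2n-18)/4}$ for $n\ge 6$.

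Finally, the small cases $n\in\{2,3,4,5\}$ I would treat separately to obtain the sharp constants $\alpha_2=1$, $\alpha_3=2^{1/2}$, $\alpha_4=3\cdot2^{1/2}$, $\alpha_5=15\cdot2^{1/2}$. Here the clean identity $s_1^2-2s_2=\sum_i x_i^2\ge R^2$ disposes of the sub-case $j=1$ at once (giving $R\le(M^2+2M)^{1/2}$), while the remaining middle values of $j$ involve only a bounded search over admissible integer root patterns, which I would carry out by hand to locate the extremal configurations. I expect the genuine difficulty to lie entirely in the uniform treatment of general $n$ described in the previous paragraph, namely in propagating the two-consecutive-coefficient information through the peeling while preserving the linear dependence on $M$ and extracting the precise value of $\alpha_n$.
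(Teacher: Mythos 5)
Your reduction step is sound as far as it goes: writing $e_k$ for the elementary symmetric functions of $x_2,\dots,x_n$, the identities $s_j=e_j+x_1e_{j-1}$ and $s_{j+1}=e_{j+1}+x_1e_j$, together with the fact that $e_{j-1}$ and $e_j$ cannot both vanish (this is Lemma \ref{lem_2} applied to the degree-$(n-1)$ factor; note Newton's inequality alone only gives $e_{k-1}e_{k+1}\le 0$ when $e_k=0$, not the non-vanishing), do yield $R\le M+|e_j|$ or $R\le M+|e_{j+1}|$. But at that point the entire theorem is still ahead of you, and the proposal contains no argument for it. Nothing in the hypothesis controls $|e_j|$ or $|e_{j+1}|$: the relation $e_j=s_j-x_1e_{j-1}$ bounds only the \emph{combination} $e_j+x_1e_{j-1}$, so after one peeling step you no longer have a pair of small consecutive symmetric functions — you have a pair of small linear combinations with the unknown $x_1$ as a coefficient. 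The induction hypothesis you want (``a pair of consecutive symmetric functions under control'') therefore does not propagate, and the assertion that ``the subsequently peeled roots are pinned to constants determined by $n$'' is precisely the statement to be proved; it is asserted, not derived. The danger to be ruled out is cancellation — several huge roots conspiring to keep two consecutive $s_k$ small — which example \eqref{the_example} shows really does happen for non-consecutive coefficients; any correct proof must exploit consecutiveness quantitatively, and your sketch never writes down an inequality chain converting \eqref{ineq_1}--\eqref{ineq_2} plus $|s_j|,|s_{j+1}|\le M$ into a bound on $|e_j|$.

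For comparison, the paper closes this gap by a mechanism absent from your proposal: it never peels roots, but instead grows the window of controlled coefficients. Inequality \eqref{ineq_2} gives the ``second principle'' $\min(|a_{l-1}|,|a_{l+2}|)\le 3\max(|a_l|,|a_{l+1}|)$, so a controlled pair extends to a controlled triple in one direction; to keep extending, the paper multiplies $Q$ by the auxiliary polynomial $f_{t-2}(x)$ (a product of $(x+1)$ and powers of $(x^2-1)$, whose coefficients have absolute sum $2^{\lfloor (t-1)/2\rfloor}$ and whose constant term is $\pm1$), so that a length-$t$ controlled chain of $Q$ becomes a controlled consecutive pair of $Q\cdot f_{t-2}\in\mathcal{F}_{n+t-2}$; applying the second principle there and deconvolving bounds one more coefficient of $Q$, giving $k(t+1)\le\bigl(4\cdot 2^{\lfloor (t-1)/2\rfloor}-1\bigr)k(t)$. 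Iterating until the chain reaches either $(a_1,a_2)$ or $a_n$, the ``first principle'' ($a_1^2-2a_2=x_1^2+\cdots+x_n^2$, respectively $|a_n|=\prod_i|x_i|\ge\max_i|x_i|$) finishes the proof and also produces the sharp small-$n$ constants $\alpha_4=3\sqrt2$ and $\alpha_5=15\sqrt2$ without any case analysis. Incidentally, your fallback for small $n$ — ``a bounded search over admissible integer root patterns \dots by hand'' — is not available either: $M$ is unbounded, so the set of admissible root configurations is infinite, and any finite search would have to be preceded by exactly the kind of uniform-in-$M$ estimate that is missing.
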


Also, the example
\begin{equation}\label{the_example}
Q(x) := \prod_{j=1}^{n/2}(x^2-r_j^2) \qquad (2 \mid n)
\end{equation}
shows that Theorem \ref{thm:1} does not generalize directly to non-consecutive coefficients.

\section{Preliminary Lemmas}

\begin{lem}\label{lem_1}
Let $Q(x) \in \mathcal{F}_n$ be a polynomial. Then
\begin{equation}\label{ineq_1}
a_{j}^2 \ge a_{j-1}a_{j+1} \quad (1 \le j \le n-1)
\end{equation}
and
\begin{equation}\label{ineq_2}
4(a_{j}^2-a_{j-1}a_{j+1})(a_{j+1}^2-a_{j}a_{j+2}) \ge (a_{j-1}a_{j+2}-a_{j}a_{j+1})^2 \quad (1 \le j \le n-2).
\end{equation}
\end{lem}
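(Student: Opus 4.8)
The plan is to use only the fact that every $Q\in\mathcal{F}_n$ has all of its roots real (they lie in $\Z^{*}\subset\R$); integrality plays no further role. It is convenient to pass to the elementary symmetric functions by writing $a_k=(-1)^k s_k$, so that \eqref{ineq_1} becomes $s_j^2\ge s_{j-1}s_{j+1}$ and \eqref{ineq_2} becomes
\[
4(s_j^2-s_{j-1}s_{j+1})(s_{j+1}^2-s_js_{j+2})\ge (s_js_{j+1}-s_{j-1}s_{j+2})^2 ,
\]
both statements being insensitive to the signs $(-1)^k$.

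For \eqref{ineq_1} I would invoke the classical Newton inequalities for a real-rooted polynomial, namely $p_j^2\ge p_{j-1}p_{j+1}$ with $p_k:=s_k/\binom{n}{k}$. Clearing denominators gives $s_j^2\ge\frac{\binom{n}{j}^2}{\binom{n}{j-1}\binom{n}{j+1}}\,s_{j-1}s_{j+1}$, and since $\binom{n}{j}^2/(\binom{n}{j-1}\binom{n}{j+1})=\frac{(n-j+1)(j+1)}{j(n-j)}>1$, the stated (weaker) inequality follows at once: when $s_{j-1}s_{j+1}\ge0$ the extra factor only helps, and when $s_{j-1}s_{j+1}<0$ the claim is trivial.

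For \eqref{ineq_2} set $P:=s_j^2-s_{j-1}s_{j+1}$, $R:=s_{j+1}^2-s_js_{j+2}$ and $S:=s_js_{j+1}-s_{j-1}s_{j+2}$. By \eqref{ineq_1} we already have $P\ge0$ and $R\ge0$, so \eqref{ineq_2} is exactly the discriminant condition $S^2\le 4PR$, i.e.\ the assertion that the quadratic $Pt^2+St+R$ is nonnegative for every real $t$. I would first prove the sharp \emph{normalized} analogue, obtained by replacing each $s_k$ by $p_k$. Its advantage is scale invariance: differentiation sends $p_k\mapsto n\,p_k$ uniformly (because $(n-k)\binom{n-1}{k}^{-1}=n\binom{n}{k}^{-1}$) and reversal $x^nQ(1/x)$ sends $p_k\mapsto p_{n-k}$, so both operations preserve the normalized second-order inequality while lowering the degree. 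Since differentiation keeps the roots real (Rolle) and reversal keeps them real because they are nonzero, these reductions strip every coefficient outside the window $a_{j-1},\dots,a_{j+2}$ and bring it down to the four coefficients of a genuine real-rooted cubic; there the inequality is a single symmetric inequality in three real variables admitting a direct sum-of-squares certificate.

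The hard part is then to transfer the sharp normalized inequality $4P^{*}R^{*}\ge (S^{*})^2$ (with $P^{*},R^{*},S^{*}$ the versions in the $p_k$) back to the unnormalized form \eqref{ineq_2}. Unlike \eqref{ineq_1}, the quartic expression in \eqref{ineq_2} is not homogeneous under the non-uniform rescaling $s_k=\binom{n}{k}p_k$, so this is not a formal consequence. Writing $P=\binom{n}{j-1}\binom{n}{j+1}\bigl(P^{*}+(\rho_1-1)p_j^2\bigr)$ and analogously for $R$ and $S$, where $\rho_1,\rho_2,\rho_3>1$ are the relevant binomial ratios, the inequality reduces after clearing the common factor to $4\rho_3\bigl(P^{*}+(\rho_1-1)p_j^2\bigr)\bigl(R^{*}+(\rho_2-1)p_{j+1}^2\bigr)\ge\bigl(S^{*}+(\rho_3-1)p_jp_{j+1}\bigr)^2$. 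I expect every cross term here to be absorbed by the slack $4P^{*}R^{*}-(S^{*})^2\ge0$ together with the manifestly nonnegative $P^{*},R^{*}\ge0$, using the definite lower bounds on $\rho_1,\rho_2,\rho_3$. Carrying out this absorption uniformly in $n$ and $j$ — rather than the routine cubic base case — is where the genuine difficulty lies.
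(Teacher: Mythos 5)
Your treatment of \eqref{ineq_1} is fine: the classical Newton inequalities $p_j^2\ge p_{j-1}p_{j+1}$ for real-rooted polynomials, plus the observation that the binomial ratio $\tfrac{(n-j+1)(j+1)}{j(n-j)}$ exceeds $1$, do yield the unnormalized inequality (the paper itself gives no argument at all here, citing Proposition 5.1 of Niculescu, so a self-contained proof is welcome). The problem is \eqref{ineq_2}: what you have written is not a proof. You correctly reduce it to the claim that $Pt^2+St+R\ge 0$ for all real $t$, but you then propose to prove only the normalized (Rosset-type) analogue and to transfer it back to the unnormalized coefficients by an ``absorption'' of cross terms that you explicitly do not carry out (``I expect every cross term here to be absorbed\dots''). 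Since, as you yourself note, \eqref{ineq_2} is not homogeneous under the non-uniform rescaling $s_k=\binom{n}{k}p_k$, that transfer \emph{is} the whole content of the inequality; leaving it as an expectation leaves \eqref{ineq_2} unproved.

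The frustrating part is that you are one observation away from a complete proof using only what you already established: your version of \eqref{ineq_1} holds for \emph{every} polynomial with all real roots, not just members of $\mathcal{F}_n$. Fix $\lambda\in\R$ and apply it to $(x-\lambda)Q(x)$, which is again real-rooted and has coefficients $c_k=a_k-\lambda a_{k-1}$. At index $j+1$ this gives $(a_{j+1}-\lambda a_j)^2\ge (a_j-\lambda a_{j-1})(a_{j+2}-\lambda a_{j+1})$, which expands to
\begin{equation*}
\lambda^2\bigl(a_j^2-a_{j-1}a_{j+1}\bigr)-\lambda\bigl(a_ja_{j+1}-a_{j-1}a_{j+2}\bigr)+\bigl(a_{j+1}^2-a_ja_{j+2}\bigr)\ \ge\ 0
\qquad (\lambda\in\R).
\end{equation*}
Since the leading coefficient $P=a_j^2-a_{j-1}a_{j+1}$ is nonnegative by \eqref{ineq_1}, the discriminant of this quadratic in $\lambda$ must be nonpositive (in the degenerate case $P=0$ the linear coefficient must vanish), which is exactly \eqref{ineq_2}. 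In other words, the quadratic $Pt^2+St+R$ whose nonnegativity you were trying to establish is itself a Newton expression --- for the perturbed polynomial $(x-t)Q(x)$ --- so \eqref{ineq_2} follows from \eqref{ineq_1} with no normalized inequality, no reduction to cubics, and no absorption step. This is essentially the argument behind Niculescu's Proposition 5.1, which is what the paper invokes in place of a proof.
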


\begin{proof}
This is Proposition $5.1$ of \cite{cpn}.
\end{proof}

\begin{lem}\label{lem_2}
Let $Q(x) \in \mathcal{F}_n$ be a polynomial. Then $Q(x)$ does not have two consecutive zero coefficients.
\end{lem}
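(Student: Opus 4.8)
The plan is to argue by contradiction, using the \emph{nonvanishing} of the roots through a differentiation trick. It is worth stressing at the outset that the hypothesis that the roots lie in $\Z^*$ is indispensable: the polynomial $x^4-x^2$ is real-rooted yet has the two consecutive zero coefficients $a_3=a_4=0$, the obstruction being precisely its root at $0$. To begin, I would record the two boundary facts $a_0=1\neq 0$ and $a_n=(-1)^n s_n=(-1)^n x_1\cdots x_n\neq 0$, the latter because every root is a nonzero integer. Consequently any hypothetical pair of consecutive zero coefficients is interior. Assuming such a pair exists, let $a_p=a_{p+1}=\cdots=a_q=0$ be the \emph{maximal} run of consecutive vanishing coefficients containing it, so that $q\ge p+1$, $a_{p-1}\neq 0$, $a_{q+1}\neq 0$, and $1\le p<q\le n-1$.

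The key step is to differentiate $Q$ exactly $m:=n-q-1\ge 0$ times. Since $Q$ has $n$ real roots, Rolle's theorem guarantees that $R:=Q^{(m)}$ is a real-rooted polynomial of degree $q+1\ge 2$. A direct computation gives $R(x)=\sum_{i=0}^{q+1} a_i\frac{(n-i)!}{(q+1-i)!}x^{\,q+1-i}$, so the constant term of $R$ is proportional to $a_{q+1}\neq 0$, while the coefficients of $x$ and of $x^2$ are proportional to $a_q=0$ and $a_{q-1}=0$ respectively (that $a_{q-1}=0$ uses $q-1\ge p$, i.e. that the run has length at least two). In particular $0$ is not a root of $R$, so writing $R(x)=A\prod_{t=1}^{q+1}(x-r_t)$ all the $r_t$ are real and nonzero.

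It then remains to translate the two vanishing coefficients into the elementary symmetric functions of the reciprocals $1/r_t$. Vanishing of the linear coefficient of $R$ forces $\sum_t 1/r_t=0$, and vanishing of the quadratic coefficient forces $\sum_{s<t} 1/(r_s r_t)=0$; combining these via $\sum_t 1/r_t^2=(\sum_t 1/r_t)^2-2\sum_{s<t}1/(r_sr_t)$ yields $\sum_t 1/r_t^2=0$, which is impossible since each summand is strictly positive. This contradiction proves the lemma.

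I expect the main obstacle to be organizational rather than computational: one must choose the order of differentiation so that the run of zeros is pushed down onto the \emph{linear and quadratic} coefficients while the constant term survives. This is exactly what forces the use of the top index $q$ of the maximal run together with $a_{q+1}\neq 0$, since it is the nonvanishing constant term that encodes the crucial hypothesis that the roots are nonzero. The coefficient bookkeeping for $Q^{(m)}$ and the identification of the proportionality constants are routine.
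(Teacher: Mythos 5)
Your proof is correct, and it differs from the paper's treatment in an essential way: the paper offers no argument at all for this lemma, disposing of it with the single line ``It follows from Lemma $1.5$ of \cite{cpn}.'' What you have done, in effect, is reconstruct a self-contained proof of the cited fact. I checked the key steps: with $Q(x)=\sum_{i=0}^{n}a_ix^{n-i}$ and $m=n-q-1$, the polynomial $R=Q^{(m)}$ indeed equals $\sum_{i=0}^{q+1}a_i\frac{(n-i)!}{(q+1-i)!}x^{q+1-i}$, so its constant term is $(n-q-1)!\,a_{q+1}\neq 0$ while the coefficients of $x$ and $x^{2}$ are multiples of $a_q=0$ and $a_{q-1}=0$; your choice of the top index $q$ of a \emph{maximal} run is exactly what guarantees both $a_{q+1}\neq 0$ and $q-1\ge p$. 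Repeated application of Rolle's theorem preserves real-rootedness, the nonzero constant term excludes the root $0$, and then $e_q(r)=e_{q-1}(r)=0$ forces $\sum_t r_t^{-2}=\bigl(\sum_t r_t^{-1}\bigr)^{2}-2\sum_{s<t}(r_sr_t)^{-1}=0$, a contradiction since each term is positive. Two remarks on the comparison. First, your argument uses only that the roots are real and nonzero, never their integrality, which is the same level of generality as the cited lemma, so nothing is lost. Second, the lemma genuinely requires an input beyond the paper's Lemma \ref{lem_1}: the polynomial $x^4+1$ satisfies both \eqref{ineq_1} and \eqref{ineq_2} yet has consecutive zero coefficients, so no formal manipulation of those inequalities alone could yield the lemma, and an argument like yours (or the external citation) is necessary. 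The trade-off is the obvious one: the citation is shorter, while your proof makes the result self-contained and exposes the mechanism --- differentiation pushes the run of zeros onto the linear and quadratic coefficients while the constant term, which encodes the nonvanishing of the roots, survives.
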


\begin{proof}
It follows from Lemma $1.5$ of \cite{cpn}.
\end{proof}

\begin{rem}
Let $Q(x) \in \mathcal{F}_n$ be a polynomial. It follows from Lemma \ref{lem_2} and inequality \eqref{ineq_1} that if $a_j=0$ for some $j \in \{1,\dots, n-1\}$ then $a_{j-1}a_{j+1} < 0$.
\end{rem}

\section{Proof of Theorem \ref{thm:1}}

Let $Q(x) \in \mathcal{F}_n$ be a polynomial satisfying $\max(|a_{j}|,|a_{j+1}|) \le M$ for some $j \in \{1,\dots,n-1\}$.

{\bf First principle.} Clearly $\max(|x_1|,\dots,|x_n|) \le |a_n|$. Also, from
$$
a^2_1-2a_2 = x^2_1 + \cdots + x^2_n
$$
we deduce that $\max(|x_1|,\dots,|x_n|) \le \sqrt{2}\max(|a_1|,|a_2|) =: \sqrt{2}A$. Indeed, $|a^2_1-2a_2| \le A^2(1+\frac{2}{A}) \le 2A^2$ since $A=1$ implies $|a^2_1-2a_2| \le 3$ and $\max(|x_1|,\dots,|x_n|) \le 1 \le \sqrt{2}A$.\\

\noindent{\bf  Second principle.} We have
$$
\min(|a_{l-1}|,|a_{l+2}|) \le 3\max(|a_{l}|,|a_{l+1}|) \quad (1 \le l \le n-2).
$$
Indeed, we write $A:=\max(|a_{l}|,|a_{l+1}|)$. If $\min(|a_{l-1}|,|a_{l+2}|) \ge \beta A$ then $|a_{l-1}a_{l+2}-a_{l}a_{l+1}| \ge \bigl(1-\frac{1}{\beta^2}\bigr)|a_{l-1}a_{l+2}|$. From inequality \eqref{ineq_2} we find
$$
\bigl(1-\frac{1}{\beta^2}\bigr)^2|a_{l-1}a_{l+2}|^2 \le 4A^2\bigl(1+\frac{1}{\beta}\bigr)^2|a_{l-1}a_{l+2}|
$$
so that
$$
\beta A \le \min(|a_{l-1}|,|a_{l+2}|) \le \frac{2\beta}{\beta-1}A
$$
and the result follows. The example $a_{l-1}=-3, a_{l}=1, a_{l+1}=1$ and $a_{l+2}=-3$ shows that it is the best possible result that can be shown from inequality \eqref{ineq_2}.

For each $m \ge 1$ we consider the polynomials
$$
f_m(x):=\left\{\begin{array}{ll} (x+1)(x^2-1)^{\frac{m-1}{2}} & \mbox{if}\ 2 \nmid m,\\ [1mm]
(x^2-1)^{\frac{m}{2}} & \mbox{if}\ 2 \mid m. \end{array}\right.
$$
These unitary polynomials have relatively small coefficients and have only integer roots. In particular, the sum of the absolute value of the coefficients of $f_{m}(x)$ is $2^{\lfloor\frac{m+1}{2}\rfloor}$ and $|f_m(0)|=1$.

We are now ready for the proof. We will establish an upper bound of the shape $k(t)M$ for the absolute value of each coefficients in some chain of $t$ consecutive coefficients with $t \ge 2$. By hypothesis $k(2) =1$. The second principle implies that $k(3) \le 3$.

Let us assume that $k(t)$ is known. We consider the polynomial
$$
Q_t(x):=Q(x)f_{t-2}(x).
$$
Clearly, $Q_t(x)  \in \mathcal{F}_{n+t-2}$ and has two consecutive coefficients that are bounded by $2^{\lfloor\frac{t-1}{2}\rfloor}k(t)M$ in absolute value. By using the second principle, we deduce that a third coefficient of $Q_t(x)$ (consecutive to the two others) is bounded by  $3 \cdot 2^{\lfloor\frac{t-1}{2}\rfloor}k(t)M$ in absolute value. From this coefficient, we deduce that $k(t+1) \le \bigl(4 \cdot 2^{\lfloor\frac{t-1}{2}\rfloor}-1\bigr)k(t)$. One can use this idea to show that $k(4) \le 15$ directly.

We verify that
\begin{eqnarray*}
k(t) & \le & k(4)\prod_{j=2}^{t-3}\bigl(4 \cdot 2^{\lfloor\frac{j+1}{2}\rfloor}-1\bigr)\\
& \le & 2^{\frac{(t+1)^2+2(t+1)-20}{4}}
\end{eqnarray*}
for $t \ge 5$. The last inequality is shown by induction on odd and even values of $t$ separately.

The result follows from the first principle applied to a chain of length $t$ for some $t \in \{2,\dots,\max(2,n-1)\}$.

\section{Concluding remarks}

We can show that the number of binary chains of length $n$ that contain either two consecutive 1 or that end with 1 is of
$$
2^n-\biggl(\frac{1}{2}+\frac{3\sqrt{5}}{10}\biggr)\biggl(\frac{1+\sqrt{5}}{2}\biggr)^{n-1}+\biggl(-\frac{1}{2}+\frac{3\sqrt{5}}{10}\biggr)\biggl(\frac{1-\sqrt{5}}{2}\biggr)^{n-1} \quad (n \ge 2).
$$
This shows that Theorem \ref{thm:1} is enough to establish that most subsets of coefficients fix finitely many polynomials of $\mathcal{F}_n$. The example \eqref{the_example} shows that in general there are $\gg 2^{n/2}$ subsets of coefficients that do not fix finitely many polynomials of $\mathcal{F}_n$ when $2 \mid n$.

{\sc D\'epartement de math\'ematiques et de statistique, Universit\'e Laval, Pavillon Alexandre-Vachon, 1045 Avenue de la M\'edecine, Qu\'ebec, QC G1V 0A6} \\
{\it E-mail address:} {\tt Patrick.Letendre.1@ulaval.ca}

\end{document}